\newlength\PullBackLength
\newcommand\PullBack[1][2]{%
  \setlength{\global\PullBackLength}{#1em}%
  \kern\PullBackLength%
  &
  \kern-\PullBackLength}
\newtheorem{theorem}{Theorem}
\newtheorem{lemma}[theorem]{Lemma}
\theoremstyle{definition}
\newtheorem{problem}[theorem]{Problem}
\newtheorem{definition}[theorem]{Definition}
\newcommand{\normalvec}[1]{\overset{\,\rightarrow}{\raisebox{0pt}[6pt][0pt]{$#1$}}}
\DeclareMathOperator{\diam}{diam}
\title{The Complexity of Two Graph Orientation Problems}
\author{Nicole Eggemann\thanks{Supported by the EC Marie Curie programme NET-ACE (MEST-CT-2004-6724).}\mbox{ } and Steven D. Noble\thanks{Partially supported by the Heilbronn Institute for Mathematical Research, Bristol, UK.}\\Department of Mathematical Sciences, Brunel University\\Kingston Lane\\Uxbridge\\UB8 3PH\\United Kingdom\\ nicole.eggemann@brunel.ac.uk and steven.noble@brunel.ac.uk}
\begin{document}
\maketitle

\begin{abstract}
We consider two orientation problems in a graph, namely the minimization of the sum of all the shortest path lengths and the minimization of the diameter.
We show that it is NP-complete to decide whether a graph has an orientation such that the sum of all the shortest paths lengths is at most an integer specified in the input. The proof is a short reduction from a result of Chv\'atal and Thomassen showing that it is NP-complete to decide whether a graph can be oriented so that its diameter is at most $2$. In contrast, for each positive integer $k$, we describe a linear-time algorithm that decides for a planar graph $G$ whether there is an orientation for which the diameter is at most $k$. We also extend this result from planar graphs to any minor-closed family $\mathcal F$ not containing all apex graphs.
\end{abstract}

\section{Introduction}
We consider two problems concerned with orienting the edges of an undirected graph in order to minimize two global measures of distance in the resulting directed graph.
Our work is motivated by an application involving the design of urban light rail networks of the sort described in~\cite{lowson:10}. In such an application, a number of stations are to be linked with unidirectional track in order to minimize some function of the travel times between stations and subject to constraints on cost, engineering and planning. In practice these constraints mean that the choice of which stations to link may be forced upon us and the only control we have is over the choice of direction of each piece of track. Since the stations that are linked tend to be those that are close to each other, we make the simplifying assumption that the travel time along each single piece of track or link is the same. Consequently the network can be viewed as an (unweighted) graph in which the vertices represent stations and the edges represent track that is to be built. Furthermore planning constraints tend to rule out the possibility of tracks crossing, so the graph is usually planar. The aim is to orient the resulting graph to minimize travel time. We assume that each journey in the oriented network progresses along the shortest directed path from the vertex representing the starting station to the vertex representing the destination.

All our graphs are simple, that is they have no loops or parallel edges. When the underlying graph is obvious, we use $n$ and $m$ to denote its numbers of vertices and edges, respectively. We use $\normalvec{G}$ to denote a directed graph obtained by orienting the edges of $G$. We let $d(x,y)$ denote the distance from vertex $x$ to vertex $y$ in a directed graph. The two measures of the quality of an orientation are its \emph{diameter} $\diam(\normalvec{G})$, given by $\diam(\normalvec{G})=\max_{x\not= y} d(x,y)$
and the \emph{Wiener Index} $Z(\normalvec{G})$, given by
$Z(\normalvec{G})=\sum_{x\not=y} d(x,y)$.
The name Wiener Index is perhaps not widely used but is more common in applications in chemistry.

The networks arising in the application tend to be planar and have small degree. Our original aim was to determine the complexity of minimizing $\diam(\normalvec{G})$ and $Z(\normalvec{G})$ for planar graphs of bounded degree. We have two partial results in this direction.
In the next section we show that the problem where we are given a graph $G$ and an integer $k$ and must determine whether $G$ can be oriented so that its Wiener Index is at most $k$, is NP-complete. We believe that it should be possible to sharpen this result so that the input graph is restricted to having degree at most three but our idea for a proof became extremely complicated, so we have not pursued this. We do not know whether the input may be restricted to planar graphs. Our hardness result depends on a result of Chv\'atal and Thomassen~\cite{chvatal:78} who showed that determining whether a graph may be oriented so that its diameter is at most two is NP-complete.

In contrast, a result of Bollob\'as and Scott~\cite{bollobas+scott:07-diam2} shows that an oriented graph with diameter two and $n$ vertices must have at least $(1+o(1))n\log_2 n$ edges. Since a planar graph with $n$ vertices has at most $3n-6$ edges, this implies that there is a constant upper bound on the number of vertices in a planar oriented graph with diameter two. So there is a constant time algorithm to determine whether a planar graph can be oriented so that its diameter is at most two. However there are arbitrarily large planar graphs that can be oriented so that their diameter is three, for example, a set of triangles sharing a common edge.

The bulk of this paper is devoted to showing that for any fixed integer $l$, there is a polynomial time algorithm that will take a planar graph and determine whether it may be oriented so that its diameter is at most $l$. In fact we establish rather more than this. An \emph{apex} graph is a graph $G$ having a vertex $v$ such that $G-v$ is planar.
Using a result of~\cite{eppstein:diameter}, we can extend our main result to apply not just to planar graphs but to any minor closed family of graphs not containing all apex graphs.

In Section~\ref{sectree} we discuss necessary concepts from tree-width. In Section~\ref{secalg} we describe an algorithm that attempts to find a suitable orientation when the input graph has bounded tree-width.
Section~\ref{secres} contains our main result.
The main reason that our algorithm works is that the diameter of a graph does not increase when an edge is contracted and is at least $\Omega(l)$ for an $l\times l$-grid. Such a parameter is essentially what is called contraction-bidimensional in~\cite{demaine+fomin:06_bidimensional_local_treewidth,demaine+hajiaghayi:08_diameter,demaine+hajiaghayi:08_linearity_of_grid_minors}, where a general framework is described for when the corresponding decision problems for these parameters are tractable. Perhaps the most notable example is finding a $k$-dominating set in a planar graph~\cite{alber:02_domination,fomin+thilikos:06_domination_planar}.

\section{Complexity of the Wiener Index}\label{sec:relprob}
Imagine we are given a graph and an integer $k$ and we would like to know whether the graph can be oriented in such a way that the Wiener Index is less than $k$.
In this section we investigate the complexity of this problem.

\begin{sloppy}
Chv\'atal and Thomassen~\cite{chvatal:78} showed that the following problem is NP-complete.
\begin{problem}\label{prob:chva}
\mbox{ }\\
Instance: A graph $G$.\\
Question: Is it possible to orient $G$ to ensure that $\diam(\normalvec{G})\le 2$?
\end{problem}
\end{sloppy}

From this result we can easily conclude that the following problem concerning the Wiener Index is NP-complete.
\begin{problem}\label{prob:wien}
\mbox{ }\\
Instance: A graph $G$, integer $k$.\\
Question: Is it possible to orient $G$ to ensure that the Wiener Index of $\normalvec{G}$ is at most $k$?
\end{problem}
\begin{theorem}
Problem~\ref{prob:wien} is NP-complete.
\end{theorem}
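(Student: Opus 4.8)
The plan is to show membership in NP and then give a short polynomial-time reduction from Problem~\ref{prob:chva}. For membership, a certificate is an orientation $\normalvec{G}$ of $G$: given $\normalvec{G}$ I would compute $d(x,y)$ for every ordered pair by running breadth-first search from each vertex, in time $O(nm)$. If $\normalvec{G}$ is not strongly connected some distance is infinite and I reject; otherwise I sum the $n(n-1)$ distances and compare the total with $k$. Hence Problem~\ref{prob:wien} lies in NP.

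For NP-hardness I would reduce from Problem~\ref{prob:chva}. Given an instance $G$ with $n$ vertices and $m$ edges, output the instance $(G,k)$ of Problem~\ref{prob:wien} with $k = 2n(n-1)-m$, which is computable in polynomial time. The equivalence rests on a counting argument valid for every orientation $\normalvec{G}$. Since $G$ is simple, an ordered pair $(x,y)$ satisfies $d(x,y)=1$ precisely when $\normalvec{G}$ contains the arc from $x$ to $y$; thus exactly $m$ of the $n(n-1)$ ordered pairs lie at distance $1$, while each of the remaining $n(n-1)-m$ pairs is at distance at least $2$ (or infinite). Consequently
\[
  Z(\normalvec{G}) \;\ge\; m + 2\bigl(n(n-1)-m\bigr) \;=\; 2n(n-1)-m \;=\; k,
\]
with equality exactly when every pair not joined by an arc is at distance precisely $2$, that is, when $\diam(\normalvec{G})\le 2$. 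If instead $\diam(\normalvec{G})\ge 3$ then at least one pair contributes $3$ rather than $2$, forcing $Z(\normalvec{G})\ge k+1$, and if $\normalvec{G}$ is not strongly connected then $Z(\normalvec{G})$ is infinite; in either case $Z(\normalvec{G})>k$. Therefore some orientation achieves $Z\le k$ if and only if some orientation has diameter at most $2$, which is the required equivalence and completes the reduction.

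Since the reduction is so direct, there is no substantial obstacle; the only points needing care lie at the boundary. First, one must verify that the distance-$1$ pairs are counted exactly, where simplicity of $G$ is essential: if parallel edges were allowed, both an arc and its reverse could be present, breaking the clean tally of $m$ pairs at distance $1$. Second, one must confirm that \emph{every} orientation failing the diameter-$2$ condition—whether through a pair at distance at least $3$ or through a failure of strong connectivity—is pushed strictly above the threshold $k$, so that the backward direction of the equivalence holds without exception.
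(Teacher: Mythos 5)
Your proposal is correct and follows essentially the same route as the paper: the same threshold $k=2n(n-1)-m=2(n^2-n)-m$, the same counting of the $m$ ordered pairs at distance $1$ versus the remaining $n(n-1)-m$ pairs at distance at least $2$, and the same conclusion that equality holds exactly when the diameter is at most $2$. Your treatment is slightly more explicit about NP membership and about the non-strongly-connected case, but the argument is the paper's.
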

\begin{proof}
The problem is clearly in NP. Suppose that $G$ has $m$ edges. Let $k=2(n^2-n)-m$. If $\diam(\normalvec{G})\le 2$ then all pairs of vertices are either joined by an edge or by a path of length two. So $Z(\normalvec{G})=2(n^2-n)-m=k$. Conversely if $\diam(\normalvec{G})>2$, there are $n^2-n-m$ pairs of vertices joined by paths of length at least two including at least one path of length at least three, so $Z(\normalvec{G})>2(n^2-n)-m=k$.
Consequently there is an orientation of $G$ with $\diam(\normalvec{G})\le 2$ if and only if there is an orientation of $G$ with $Z(\normalvec{G}) \le k$.
\end{proof}

We have been unable to determine the complexity of the following problem.
\begin{problem}\label{pro:unsolved}
\mbox{ }\\
Instance: Planar graph $G$ and integer $k$.\\
Question: Can we orient the edges of $G$ so that $Z(\normalvec{G}) \le k$?
\end{problem}

The rest of the paper is dedicated to the investigation of the complexity of the following problem for any fixed integer $l$ and any minor-closed family $\mathcal F$ of graphs not containing all apex graphs.

\begin{problem}\label{prob:orient}
\mbox{ }\\
Instance: Graph $G$ belonging to $\mathcal F$. \\
Question: Can we orient the edges of $G$ so that $\diam(\normalvec{G}) \le l$?
\end{problem}

There is a considerable amount of work devoted to orienting graphs to minimize the diameter, see for instance the survey~\cite{koh:02_survey} or the book~\cite{bang-Jensen+Gutin:book}. Much of the focus has been on very specific classes of graphs. One algorithmic result is that for $l \ge 4$, it is NP-complete to determine whether a chordal graph has an orientation of diameter at most $l$~\cite{fomin+matamala+rapaport:04}.

\section{Tree-decompositions}\label{sectree}
The notion of a tree-decomposition was developed by Robertson and Seymour in~\cite{robertson+seymour:gm3}. Good introductions to the theory of tree-decompositions can be found, for example, in~\cite{bod:touristguide,lovasz:05,thomas:_treedec}. The definition of a tree-decomposition is as follows.
\begin{definition}
A tree-decomposition $\mathcal{T}$ of a graph $G$ is a pair $(T,\mathcal{W})$ where $T$ is a tree and $\mathcal{W}=(W_t:t\in V(T))$ is a family of subsets of $V(G)$ such that:
\begin{itemize}
\item $\bigcup_{t \in V(T)}W_t=V(G)$ and every edge in $G$ has both endpoints in $W_t$ for some $t$;
\item if $t,t',t'' \in V(T)$ and $t'$ lies on the path from $t$ to $t''$ in $T$ then $W_t \cap W_{t''} \subseteq W_{t'}$.
\end{itemize}
The width of $(T,\mathcal{W})$ is defined to be
\begin{eqnarray*}
\max\{|W_t|-1:t \in V(T)\}.
\end{eqnarray*}
The tree-width of $G$ is the minimum width among all possible tree-\newline decompositions of $G$.
\end{definition}

One reason for the importance of tree-width is that many NP-hard problems can be solved in polynomial or even linear time when restricted to graphs of bounded tree-width~\cite{bod:touristguide,thomas:_treedec}.

Bodlaender~\cite{bod:linalg} gave a linear-time algorithm for finding tree decompositions of small width.
\begin{theorem}\label{linalg}
For any positive integer $k$, there is an algorithm running in time $O\big(2^{\theta(k^3)}n\big)$ that inputs a graph $G$ and determines whether the tree-width of $G$ is at most $k$, and if so finds a tree-decomposition of $G$ with width at most $k$.
\end{theorem}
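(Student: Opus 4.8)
The plan is to build a recursive algorithm that, on each call, replaces $G$ by a graph $G'$ with a constant fraction fewer vertices while preserving the property "tree-width at most $k$", recurses on $G'$, and then lifts and repairs the returned decomposition. Linearity will follow because each level costs time linear in the current number of vertices and the sizes decrease geometrically. The engine driving the size reduction is a structural lemma, and the engine repairing the decompositions is a dynamic program over tree-decompositions of bounded width; the $2^{\theta(k^3)}$ factor will come from the latter.

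First I would prove the structural lemma: there is a constant $c<1$, depending only on $k$, such that every graph of tree-width at most $k$ on sufficiently many vertices contains a collection, of size at least $(1-c)n$, of \emph{reducible} configurations. These are either simplicial vertices of degree at most $k$, which may be deleted, or a matching of ``friendly'' edges whose endpoints have bounded degree and small neighbourhoods, which may be contracted. Deleting and contracting all of these at once yields a graph $G'$ with $|V(G')|\le c\,|V(G)|$ and with $\mathrm{tw}(G')\le k$ if and only if $\mathrm{tw}(G)\le k$. The point of the lemma is that if no such linear-sized reducible collection is found, one may immediately and correctly report $\mathrm{tw}(G)>k$, since its existence is forced whenever $\mathrm{tw}(G)\le k$.

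Next, assuming a width-$\le k$ tree-decomposition of $G'$ has been computed recursively, I would un-contract and re-insert the deleted and merged vertices to obtain a tree-decomposition of $G$. A careful accounting shows that each bag grows by at most a bounded factor, so this lifted decomposition has width at most $2k+1$. Then comes the technical heart of the argument: the Bodlaender--Kloks procedure, which takes a tree-decomposition of $G$ of width $O(k)$ and, in time linear in $n$, either produces a decomposition of width at most $k$ or certifies that $\mathrm{tw}(G)>k$. This works by processing the given decomposition bottom-up and maintaining at each node the set of ``characteristics'' of restricted width-$\le k$ partial decompositions compatible with the subtree rooted there. The number of distinct characteristics is bounded by $2^{\theta(k^3)}$, which is exactly where that factor enters.

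Putting the pieces together, the running time obeys $T(n)\le T(cn)+2^{\theta(k^3)}\,n$, which solves to $T(n)=O\!\big(2^{\theta(k^3)}n\big)$ by summing the geometric series. I expect the main obstacle to be the two delicate ingredients rather than the recursion itself: establishing the structural lemma, where one must exploit the existence of a low-width decomposition to guarantee that a constant fraction of the vertices lie in simplicial or friendly-matching configurations, and implementing the characteristic-based dynamic program so that each tree node is processed in time depending only on $k$, ensuring that the improvement pass is genuinely linear and the characteristic bound $2^{\theta(k^3)}$ is respected.
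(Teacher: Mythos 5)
The paper does not prove this statement at all: it is quoted verbatim as Bodlaender's theorem and used as a black box, with a citation to his 1996 \emph{SIAM J.\ Comput.}\ paper standing in for the proof. So there is no ``paper's own proof'' to compare against; what you have written is, in effect, a reconstruction of Bodlaender's actual argument, and as an architectural outline it is faithful to it. The recursive size-reduction via simplicial/low-degree vertices and a linear-sized matching of friendly edges, the lifting of the recursively obtained decomposition to one of width at most $2k+1$, the Bodlaender--Kloks characteristic-based improvement pass contributing the $2^{\theta(k^3)}$ factor, and the recurrence $T(n)\le T(cn)+2^{\theta(k^3)}n$ are exactly the components of the real proof.

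Two caveats. First, your claim that the reduced graph $G'$ satisfies $\mathrm{tw}(G')\le k$ \emph{if and only if} $\mathrm{tw}(G)\le k$ is too strong for the contraction step: contracting a matching can only decrease tree-width, so only the forward implication $\mathrm{tw}(G)\le k\Rightarrow\mathrm{tw}(G')\le k$ holds in general. This does not break your algorithm --- the forward implication is all that is needed to justify answering ``no'' when the recursive call fails, and the Bodlaender--Kloks pass on the lifted width-$(2k+1)$ decomposition is precisely what certifies the answer for $G$ itself when the recursive call succeeds --- but the equivalence as stated is false and should not be asserted. Second, the two ingredients you defer (the structural lemma guaranteeing a linear number of reducible configurations in any graph of tree-width at most $k$, and the linear-time characteristic dynamic program with its $2^{\theta(k^3)}$ bound) are each the subject of a substantial paper; without them your text is a correct roadmap rather than a proof. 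Given that the paper itself treats the theorem as a citation, that is a reasonable level of detail, but it should be presented as a sketch of Bodlaender's argument rather than as a self-contained derivation.
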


\section{Minimizing diameter: the bounded tree-width case}\label{secalg}

In this section we show that there is an algorithm which for fixed integers $k,l$, takes as input a graph $G$ and a tree-decomposition with width $k$ and determines whether there is an orientation of $G$ with diameter at most $l$. The algorithm runs in time $O(cn)$, where $c$ is a constant depending on $k$ and $l$.

It is possible to construct such an algorithm explicitly. For a full description see~\cite{eggemann:thesis} or for a brief outline see~\cite{eggemann-noble:ENDM}. There, it is shown that $c$ may be taken to be
\[ (l+1)^{2(k+1)^2}2^{[4(l+1)^{k+1}+2(l+1)^{2k+2}]}k^2.\]

Given that a full description of the algorithm is extremely lengthy and that the constant is so large, we do not describe the explicit algorithm here but instead prove the existence of a linear time algorithm by using the theory of monadic second-order logic of graphs (MSOL) introduced by Courcelle in~\cite{courcelle:MSOL1}.

We briefly give some background on MSOL here but for more information see~\cite{courcelle:HandbookTCS} or~\cite{courcelle:MSOL1}.
A graph $G$ is represented by a triple $<V,E,R>$ where $V$ and $E$ are just the vertex and edge sets respectively of $G$ and $R \subset V \times E$ is a relation with $(v,e)\in R$ if $v$ is an endpoint of $e$.
An MSOL formula on $<V,E,R>$
 may contain \emph{member variables}, denoting members of either $V$ or $E$, and \emph{set variables}, denoting subsets of either $V$ or $E$. The atomic formulae of an MSOL formula on $<V,E,R>$ are $v\in U$, $e\in A$, $v=w$, $e=f$ and $(v,e)\in R$ where $v,w$ are variables denoting vertices, $e,f$ are variables denoting edges, $U$ denotes a set of vertices and $A$ denotes a set of edges. Standard logical connectives are permitted and both existential and universal quantification is allowed over both types of variable.

Courcelle~\cite{courcelle:MSOL3} showed that for any graph property $\mathcal P$, that may be expressed by an MSOL formula, for each $k$, the problem of determining which graphs satisfy $\mathcal P$ is solvable in time $O(|E||V|)$ when the input is restricted to graphs having tree-width at most $k$. Arnborg, Lagergren and Seese~\cite{arnborg:MSOL} gave another proof of this result, reducing the time bound to $O(|V|)$.

\begin{theorem}\label{th:bt}
For any $k$ and $l$, there exists an algorithm
that takes as input a graph $G$ with tree-width at most $k$ and a tree decomposition of $G$ with width at most $k$, and determines whether $G$ can be oriented so that its diameter is at most $l$. The algorithm runs in time $O(c_{k,l}n)$ where $c_{k,l}$ depends only on $k$ and $l$.
\end{theorem}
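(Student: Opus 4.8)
The plan is to exhibit a single MSOL sentence $\phi_{k,l}$ on $\langle V,E,R\rangle$ with the property that, for every graph $G$ of tree-width at most $k$, the graph $G$ can be oriented to have diameter at most $l$ if and only if $G\models\phi_{k,l}$. Once such a sentence is in hand the theorem is immediate from the meta-theorem of Courcelle quoted above, in the linear-time form of Arnborg, Lagergren and Seese: for each fixed $k$, the class of tree-width-$\le k$ graphs satisfying a fixed MSOL property is recognisable in time $O(n)$, the hidden constant depending only on the sentence (hence on $l$) and on $k$, and the tree-decomposition supplied in the input is exactly what that algorithm consumes. So the whole problem reduces to writing ``there is an orientation of diameter at most $l$'' as an MSOL formula, and the real content is to show this is possible at all.

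The main obstacle is that the logic permits quantification only over subsets of $V$ and over subsets of $E$, not over subsets of the incidence relation $R$. A single quantified edge set therefore records just one bit per edge and cannot on its own say which endpoint of an edge is its head, since the two endpoints of an edge need not be distinguishable by any formula. My plan to circumvent this is to manufacture a reference orientation from a proper vertex-colouring and then allow arbitrary reversals. Concretely, I would existentially quantify $t=\lceil\log_2(k+1)\rceil$ vertex sets $U_1,\dots,U_t$, read the colour of a vertex $v$ as the bit-string $(\mathbb{1}[v\in U_1],\dots,\mathbb{1}[v\in U_t])$, and impose the first-order requirement that this colouring is proper, namely that adjacent vertices receive different bit-strings. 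Since a graph of tree-width at most $k$ is $(k+1)$-colourable, such a colouring always exists, so this requirement never excludes a genuine instance. The colour-strings carry a fixed lexicographic order, and orienting every edge from its lower-coloured endpoint to its higher-coloured one gives a reference orientation of all of $E$; I then existentially quantify one further edge set $A$, to be read as the set of edges whose direction is reversed. As $A$ ranges over all subsets, every one of the $2^{m}$ orientations of $G$ arises, so this device expresses ``there exists an orientation'' faithfully.

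With the orientation thus encoded, the predicate $\mathrm{arc}(a,b)$, meaning ``there is an edge $e$ incident with both $a$ and $b$ that the chosen orientation directs from $a$ to $b$'', is a fixed first-order formula built from $R$, the sets $U_1,\dots,U_t$ and $A$. Because $l$ is constant, ``the directed distance from $x$ to $y$ is at most $l$'' is the bounded first-order formula $\bigvee_{r=1}^{l}\exists z_0\cdots z_r\,(z_0=x\wedge z_r=y\wedge\bigwedge_{i=1}^r\mathrm{arc}(z_{i-1},z_i))$, asserting a directed walk of length at most $l$, and the diameter condition is $\forall x\,\forall y\,(x\ne y\rightarrow{}$ this formula$)$. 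Assembling the pieces, $\phi_{k,l}$ existentially quantifies $U_1,\dots,U_t$ and $A$ and asserts both properness of the colouring and the diameter condition; its length depends only on $k$ and $l$. What remains is the short verification of the two directions of the equivalence: any satisfying assignment yields an orientation of diameter at most $l$, and conversely any good orientation is realised by taking any proper $t$-colouring together with the appropriate reversal set $A$. The cited meta-theorem then delivers the claimed $O(c_{k,l}n)$ algorithm.
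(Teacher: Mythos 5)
Your proposal is correct and follows the same overall skeleton as the paper's proof: reduce to expressing ``there is an orientation of diameter at most $l$'' in MSOL, simulate quantification over orientations by fixing a reference orientation and existentially quantifying a reversal set $A\subseteq E$, write directed distance at most $l$ as a bounded first-order formula, and invoke the Courcelle/Arnborg--Lagergren--Seese meta-theorem. The one genuinely different ingredient is how the reference orientation is obtained. The paper imports a theorem of Courcelle (from \emph{The monadic second-order logic of graphs VIII}) giving an MSOL formula $\theta(X,r,x,y)$ that orients each edge via a depth-first search spanning tree $X$ rooted at $r$; you instead existentially quantify $\lceil\log_2(k+1)\rceil$ vertex sets encoding a proper colouring (which exists because tree-width at most $k$ implies $(k+1)$-colourability) and orient each edge from its lexicographically smaller colour to its larger one. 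Your device is self-contained and elementary, avoiding the cited black box, at the mild cost that your sentence depends on $k$ and is only guaranteed to behave correctly on graphs of bounded chromatic number --- which is exactly the regime the theorem is stated for, so nothing is lost. The paper's $\theta$ yields a single formula valid for all graphs, which is slightly more robust but relies on a nontrivial external result. Both formulas have length depending only on $k$ and $l$, so both give the claimed $O(c_{k,l}n)$ bound; your argument is complete modulo the routine verification of the two directions of the equivalence, which you correctly identify.
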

\begin{proof}
The proof just consists of showing that the property of having an orientation of diameter at most $l$ is expressible in MSOL. Clearly it is necessary for $G$ to be connected in order to have an orientation with finite diameter. Since connectivity is easily expressed in MSOL we shall assume that $G$ is indeed connected.

Let $\Omega(G)$ denote the set of all orientations of the edges of $G$. From a naive perspective, a formula to check that a graph $G$ may be oriented so that its diameter is at most $l$, should have the form $\exists \omega \in \Omega(G): F(\omega,G)$ where $F$ is a formula that is satisfied if and only if $\omega$ is an orientation with the required properties.

However, it is not clear whether it is possible to quantify over all orientations in such a straighforward way because an orientation is essentially a relation on the ordered pairs of endpoints of each edge
and quantification over functions and relations on the vertices and edges is not allowed in MSOL. However, as long as it is possible to specify the existence of one arbitrary or base orientation $\omega_0$ then it becomes possible to effectively quantify over them all, by quantifying over all subsets $A$ of $E$, and reversing the orientation of edges in $A$ with respect to $\omega_0$.

Courcelle~\cite{courcelle:MSOL8} showed that it is possible to choose a base orientation
by taking a depth-first search tree. More precisely there is an MSOL formula $\theta(X,r,x,y)$ such that for each edge $xy$ we have either $\theta(X,r,x,y)$ or $\theta(X,r,y,x)$ but not both.
(Here $X$ must be a set of edges forming a spanning tree and $r$ must be a vertex representing the root of the tree.)

Now let
\begin{align*}
\PullBack{g(u,v,A,X,r)= (u \in V) \wedge (v \in V) \wedge \big[u=v \vee \exists e: e \in E \wedge uRe \wedge vRe\wedge} \\
&\big((\theta(X,r,u,v) \wedge e \not \in A) \vee (\theta(X,r,v,u) \wedge e \in A)\big)\big].
\end{align*}
So $g$ determines if either $u=v$ or there is an edge from $u$ to $v$ in the orientation formed by reversing the edges of $A$ with respect to the orientation specified by $\theta$.

We can now express the property of having an orientation of diameter at most $l$ by
\begin{align*}
\PullBack{\exists X \exists r \exists A: \forall x \forall v_0 \forall v_l [(x\in A \rightarrow x\in E)}  \\
&\wedge (( v_0\in V) \wedge (v_l\in V) \rightarrow \exists v_1, \ldots ,v_{l-1}:\\
&(g(v_0,v_1,A,X,r) \wedge g(v_1,v_2,A,X,r) \wedge  \cdots \wedge g(v_{l-1},v_l,A,X,r)))]
\end{align*}
\end{proof}
In the next section we shall see how to extend this result to certain classes of graphs containing members with arbitrarily large tree-width.

\section{Minimising the diameter when an apex graph is excluded}\label{secres}
It is now straightforward to establish our main result using the following restatement of a theorem of Eppstein~\cite{eppstein:diameter}.
\begin{theorem}\label{th:eps}
If $\mathcal{F}$ is a minor-closed family of graphs that does not include all apex graphs, then there is a function $f$ such that any graph $G\in \mathcal F$ with diameter at most $d$ has tree-width at most $O(f(d))$.
\end{theorem}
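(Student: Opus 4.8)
The plan is to deduce Theorem~\ref{th:eps} directly from the characterization in~\cite{eppstein:diameter} of minor-closed families with bounded \emph{local} tree-width, so that the argument is short and the only real work is a translation of hypotheses. Recall that the local tree-width of a graph $G$ is the function $r \mapsto \mathrm{ltw}_G(r)$ returning the maximum tree-width of a subgraph induced by a ball of radius $r$ about a vertex of $G$, and that a family $\mathcal F$ is said to have bounded local tree-width if there is a single function $g$ with $\mathrm{ltw}_G(r) \le g(r)$ for every $G \in \mathcal F$ and every $r$. Eppstein's theorem states that a minor-closed family has bounded local tree-width if and only if it does not contain all apex graphs.

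First I would make the hypothesis usable. Since $\mathcal F$ is minor-closed and fails to contain all apex graphs, there is an apex graph $H \notin \mathcal F$; as $\mathcal F$ is closed under minors, no member of $\mathcal F$ contains $H$ as a minor, so $\mathcal F$ is $H$-minor-free. This is exactly the situation to which Eppstein's characterization applies, and it yields a function $g$ bounding the local tree-width of every graph in $\mathcal F$. Here $g$ (and hence the promised $f$) depends on $\mathcal F$, specifically on the excluded apex graph $H$; this dependence is unavoidable and accounts for the phrase ``there is a function $f$'' rather than an absolute bound.

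Next I would pass from local tree-width to the diameter bound. If $G \in \mathcal F$ has diameter at most $d$, then fixing any vertex $v$, every vertex of $G$ lies within distance $d$ of $v$, so $G$ is itself the subgraph induced by the ball of radius $d$ about $v$. Hence $\mathrm{tw}(G) \le \mathrm{ltw}_G(d) \le g(d)$, and setting $f = g$ gives the claimed bound $\mathrm{tw}(G) = O(f(d))$.

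There is no genuine obstacle at the level of this derivation: the depth of the result lies entirely inside~\cite{eppstein:diameter}, whose proof rests on the Robertson--Seymour structure theorem for $H$-minor-free graphs and is imported here as a black box. The only points requiring care are the two translations above, namely that ``not all apex graphs'' is equivalent to excluding a fixed apex graph as a minor, and that a bounded-diameter graph is a single ball, so that the diameter--tree-width bound is simply the local-tree-width function evaluated at the diameter.
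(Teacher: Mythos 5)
Your derivation is correct, and it matches the paper's treatment: the paper offers no proof of Theorem~\ref{th:eps} at all, presenting it simply as a restatement of the main theorem of~\cite{eppstein:diameter}, which directly asserts that a minor-closed family has the diameter--treewidth property if and only if it does not contain all apex graphs. Your only addition is the (correct, and essentially trivial) translation from the bounded-local-treewidth formulation back to the diameter formulation via the observation that a graph of diameter $d$ is a single ball of radius $d$, together with the standard remark that omitting some apex graph from a minor-closed family is the same as excluding a fixed apex minor.
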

\begin{theorem}
For any minor-closed family $\mathcal{F}$ of graphs that does not include all apex graphs and for every $l$, Problem~\ref{prob:orient} is solvable in time $O(cn)$ where $c$ depends only on $l$ and $\mathcal{F}$.
\end{theorem}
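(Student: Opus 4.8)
The plan is to combine the three results already established --- Bodlaender's algorithm (Theorem~\ref{linalg}), the bounded tree-width algorithm (Theorem~\ref{th:bt}), and Eppstein's bound (Theorem~\ref{th:eps}) --- the whole argument hinging on a single observation relating the directed and undirected diameters.

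First I would record the key monotonicity. Suppose $G$ admits an orientation $\normalvec{G}$ with $\diam(\normalvec{G})\le l$. Then every ordered pair of vertices is joined by a directed path of length at most $l$, and forgetting orientations turns each such path into an undirected path of the same length. Hence the underlying undirected graph $G$ itself has (undirected) diameter at most $l$. Applying Theorem~\ref{th:eps} to $\mathcal{F}$ with $d=l$, any such $G\in\mathcal F$ has tree-width at most $k:=O(f(l))$, a quantity depending only on $l$ and $\mathcal F$.

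The algorithm then proceeds as follows. Fix $k$ as above. Given $G\in\mathcal F$, first invoke Bodlaender's algorithm (Theorem~\ref{linalg}) to test in time $O(2^{\theta(k^3)}n)$ whether the tree-width of $G$ is at most $k$. If the answer is no, then by the contrapositive of the previous paragraph $G$ has no orientation of diameter at most $l$, and we output ``no''. If the answer is yes, Bodlaender's algorithm also returns a tree-decomposition of $G$ of width at most $k$; we feed $G$ together with this decomposition into the algorithm of Theorem~\ref{th:bt}, which decides in time $O(c_{k,l}n)$ whether $G$ can be oriented so that its diameter is at most $l$, and we return its answer. Correctness is immediate, and the total running time is $O(cn)$ where $c$ is assembled from the constants $2^{\theta(k^3)}$ and $c_{k,l}$, both of which depend only on $k$ and $l$, hence only on $l$ and $\mathcal F$.

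The substantive content is really just the first observation combined with Eppstein's theorem; everything else is bookkeeping. I expect the only point requiring care to be the direction of the implication: Theorem~\ref{th:eps} bounds tree-width in terms of the \emph{undirected} diameter, so it is essential that possessing a good orientation forces the undirected diameter to be small (and not the reverse), which is exactly what the monotonicity argument supplies. This is what licenses us to reject a graph of large tree-width outright, without ever attempting to construct an orientation for it.
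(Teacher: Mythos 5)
Your proposal is correct and follows exactly the paper's argument: use Theorem~\ref{th:eps} to fix $k$, run Bodlaender's algorithm, reject if the tree-width exceeds $k$ (since a diameter-$\le l$ orientation would force undirected diameter $\le l$ and hence tree-width $\le k$), and otherwise apply Theorem~\ref{th:bt} to the resulting decomposition. The only difference is that you spell out the monotonicity between directed and undirected diameter, which the paper leaves implicit.
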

\begin{proof}
Fix $\mathcal F$ and $l$. Then, by Theorem~\ref{th:eps}, there exists $k$ (depending only on $\mathcal F$ and $l$) such that any graph in $\mathcal F$ with diameter at most $l$ has tree-width at most $k$.
So given an input graph $G$, using Bodlaender's algorithm we can determine whether $G$ has tree-width at most $k$ and if so find a tree decomposition with width at most $k$ in time $O(c'n)$ where $c'$ depends only on $k$. If $G$ has tree-width at most $k$ then Theorem~\ref{th:bt} implies that there is an algorithm to determine whether $G$ may oriented to have diameter at most $l$ running in time $O(c''n)$ where $c''$ depends only on $k$ and $l$. On the other hand if the tree-width of $G$ exceeds $k$ then its diameter exceeds $l$ and so it cannot be oriented to have diameter at most $l$.
\end{proof}
A consequence of this result is that for fixed $\mathcal F$ not containing all apex graphs, Problem~\ref{prob:orient} is fixed parameter tractable with respect to $l$.

Originally our main aim was to establish this result for the special case where $\mathcal F$ is the class of planar graphs. Because of this and the fact that we can obtain an expression for the constant in the running time bound, we give a sketch proof of this special case that does not use Theorem~\ref{th:eps}.

\begin{lemma}\label{le:diamh}
Any planar graph having a $(2l+1)\times (2l+1)$-grid-minor has diameter at least $l$.
\end{lemma}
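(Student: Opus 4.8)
The plan is to exploit the two-dimensional structure of the grid together with planarity. It is worth noting first that diameter is not monotone under taking minors: a long path is a minor of a planar graph of diameter two (for instance a fan, obtained from a path by adding one vertex joined to all its vertices), so one cannot simply bound the diameter of $G$ below by that of the grid, and planarity must do real work. Let $H$ be the $(2l+1)\times(2l+1)$-grid and fix a minor model of $H$ in $G$, namely pairwise disjoint connected branch sets $(B_a:a\in V(H))$ such that for every edge $ab$ of $H$ some edge of $G$ joins $B_a$ to $B_b$. Index $V(H)$ by coordinates $(x,y)$ with $x,y\in\{0,\dots,2l\}$ and let $o=(l,l)$ be the central vertex.

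First I would single out the concentric cycles of the grid. For $i=1,\dots,l$ put $C_i=\{(x,y):\max(|x-l|,|y-l|)=i\}$. Each $C_i$ is a cycle of $H$, the cycles $C_1,\dots,C_l$ are pairwise vertex-disjoint, and in $H$ each $C_i$ separates $o$ from the corner $(0,0)$, which lies on $C_l$. Since the branch sets are connected and disjoint, the branch sets of the vertices of $C_i$ together with the connecting edges contain a cycle $\hat C_i$ of $G$ that runs once around $o$, and $\hat C_1,\dots,\hat C_l$ are pairwise vertex-disjoint.

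Next I would fix a planar embedding of $G$. Because the $\hat C_i$ are the images of concentric cycles of the grid, they appear as nested simple closed curves with the central branch set $B_o$ inside all of them and $B_{(0,0)}$ outside each of $\hat C_1,\dots,\hat C_{l-1}$. Choosing $u\in B_o$ and $v\in B_{(0,0)}$, the Jordan curve theorem shows that for each $i<l$ the curve $\hat C_i$ separates $u$ from $v$; hence any path from $u$ to $v$, regarded as a curve in the embedding, must meet $\hat C_i$, and since edges do not cross it must do so at a vertex of $\hat C_i$. Counting then finishes the proof: such a path contains a vertex of $V(\hat C_i)$ for each $i=1,\dots,l-1$, and these $l-1$ vertices are distinct from each other and from $u\in B_o$ and $v\in B_{(0,0)}$ because all the branch sets involved are disjoint. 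Thus every path from $u$ to $v$ has at least $l+1$ vertices and hence length at least $l$, so the diameter of $G$ is at least $l$.

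The step I expect to be the main obstacle is the passage to the embedding: justifying rigorously that the lifted cycles are genuinely nested, with $u$ interior and $v$ exterior to each $\hat C_i$, and extracting honest simple cycles from the branch-set structure. This is precisely where a shortcut edge could, in a non-planar graph, jump across a ring and destroy the bound; planarity, through the Jordan curve theorem, is exactly what rules such shortcuts out and turns the concentric cycles into genuine separators.
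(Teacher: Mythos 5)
Your overall strategy is sound and genuinely different from the paper's: the paper contracts each branch set (using that contraction cannot increase the diameter) to reduce $G$ to a graph $K$ on exactly $(2l+1)^2$ vertices containing the grid as a spanning subgraph, then uses Whitney's theorem on the uniqueness of the grid's embedding to conclude that every extra edge of $K$ joins two vertices on a common face of the grid, whence every edge of $K$ changes the quantity $\max(|x-l|,|y-l|)$ by at most one and the centre-to-corner distance in $K$ is already at least $l$. Your route avoids contraction and instead lifts the concentric cycles of the grid to disjoint cycles of $G$ and applies the Jordan curve theorem; your counting ($l-1$ separating cycles plus the two endpoints, giving a path on at least $l+1$ vertices) is correct and yields the same bound. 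Your opening remark that diameter is not minor-monotone, so that planarity must genuinely intervene, is exactly right and explains why the paper works with contraction-monotonicity instead.

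However, the step you flag as the main obstacle is a genuine gap, and it is the entire content of the lemma. You assert that the lifted cycles $\hat C_1,\dots,\hat C_l$ ``appear as nested simple closed curves with $B_o$ inside all of them and $B_{(0,0)}$ outside,'' but nothing in your argument forces this. Two disjoint cycles in a plane graph need not be nested, and while each of the connected subgraphs $\hat I_i$ (the branch sets of the rings interior to $C_i$, joined by model edges) and $\hat O_i$ (those exterior to $C_i$) is disjoint from $\hat C_i$ and therefore lies entirely in one of its two faces, you have given no reason why they lie in \emph{different} faces. This is precisely where the planarity of $G$, as opposed to that of the grid, must be exploited, and the natural way to do it is the paper's move: realize the contraction of the branch sets inside the fixed plane embedding (contracting a connected subgraph of a plane graph can be carried out in the plane), obtaining a plane graph with the grid as a spanning subgraph, invoke the uniqueness of the grid's embedding to identify its face structure, and then pull the resulting separation statements back to $G$. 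So either import that contraction-plus-Whitney step to justify the nesting, or adopt the paper's argument outright, which has the additional advantage that once $K$ is in hand the diameter bound is a one-line ring-index count with no Jordan-curve bookkeeping.
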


\begin{proof}
Suppose that $G$ is a planar graph having a $(2l+1)\times (2l+1)$-grid-minor. We may assume that $G$ is connected because otherwise $\diam(G)=\infty$. So a $(2l+1)\times (2l+1)$-grid may be obtained from $G$ by a sequence of contractions followed by a series of deletions of edges.
If an edge of a graph is contracted then its diameter cannot increase. Let $K$ be the graph obtained from $G$ after all the contractions of edges, so $\diam(G) \ge \diam(K)$.

It follows from Whitney's Theorem~\cite{whitney:33_2-isomorphism} that a $(2l+1)\times(2l+1)$-grid has a unique embedding on a sphere. $K$ is a simple planar graph of which the $(2l+1)\times (2l+1)$-grid is a spanning subgraph. The only edges of $K$ that are not present in the grid must have both endpoints in the same face of the grid. Consequently $\diam(G)\geq\diam(K) \geq l$ and so the diameter of $G$ is at least $l$.
\end{proof}

We need the following result from~\cite{robertson+seymour+thomas:quickly-excluding-planar-graph}.

\begin{theorem}\label{minor}
Any planar graph with no $g \times g$-grid-minor has tree-width at most $6g-5$.
\end{theorem}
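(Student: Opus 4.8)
The plan is to prove the contrapositive: if a planar graph $G$ has tree-width exceeding $6g-5$, then $G$ contains a $g\times g$-grid as a minor. The first move is to replace tree-width by the more geometric parameter of \emph{branch-width}, using the Robertson--Seymour inequality $\operatorname{bw}(G)\le \operatorname{tw}(G)+1\le \lfloor\tfrac32\operatorname{bw}(G)\rfloor$, so that a tree-width lower bound translates into a branch-width lower bound of order roughly $4g$. Branch-width is the natural parameter here because on a planar graph it is essentially self-dual (the branch-widths of $G$ and of its planar dual $G^{*}$ differ by at most one), which lets me reason directly about the embedding rather than about abstract separators.

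The next step is to invoke the duality between branch-width and \emph{tangles} (equivalently, for tree-width, the Seymour--Thomas duality with brambles and havens): a graph has branch-width at least $\theta$ exactly when it admits a tangle of order $\theta$. A tangle is a consistent choice, for every low-order separation, of which side is the large, highly-connected one, and in a planar embedding such a choice behaves like a point that is surrounded in every direction by connected structure. I would make this concrete by passing to \emph{respectful tangles} defined via nooses, that is, closed curves meeting the embedding only in vertices, where the order of a separation is measured by how many vertices a noose must pass through.

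The heart of the argument, and the step I expect to be the main obstacle, is the geometric extraction: from a tangle of large order I must produce a large family of pairwise-disjoint, nested (concentric) cycles together with a transverse family, whose overlay yields a subdivided $g\times g$-grid after suitable contractions and deletions. The tangle guarantees that no short noose can separate the ``centre'' from distant regions, which forces many concentric cycles to exist; the metric on the plane induced by noose-length then bounds the grid dimension below in terms of the tangle order. Controlling this trade-off \emph{efficiently} is exactly what yields a linear bound rather than a polynomial one, and it is precisely where the careful packing and routing estimates of the respectful-tangle machinery are needed.

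Finally I would assemble the constants. A tangle of order $\theta\approx 4g$ forces on the order of $\theta/2$ concentric cycles, hence a grid minor of side $g$, and unwinding the branch-width/tree-width inequality above delivers the stated threshold $\operatorname{tw}(G)\le 6g-5$. I would close by verifying that the $g\times g$-grid so produced is genuinely a minor: contracting the radial paths joining consecutive concentric cycles, together with the arcs of the transverse family, realises exactly the grid's adjacencies, and deleting the remaining edges leaves the grid.
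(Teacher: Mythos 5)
The paper does not actually prove this statement: it is imported verbatim from Robertson, Seymour and Thomas, ``Quickly excluding a planar graph''. Your sketch follows the same general route as that source --- pass from tree-width to branch-width, invoke the branch-width/tangle duality, and extract nested cycles from a respectful tangle in a planar embedding --- so strategically you are reconstructing the cited proof rather than offering an alternative to it.

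As a proof, however, there is a genuine gap, and you have named it yourself. The ``geometric extraction'' step --- turning a tangle of order roughly $4g-2$ into $g$ pairwise disjoint nested cycles together with a transverse linkage whose overlay contains a $g\times g$-grid minor --- is the entire content of the theorem, and your proposal defers it (``the step I expect to be the main obstacle'') rather than carrying it out. Everything you do establish, namely the inequality $\operatorname{bw}(G)\le\operatorname{tw}(G)+1\le\lfloor\tfrac32\operatorname{bw}(G)\rfloor$, the tangle duality, and the planar self-duality of branch-width, is standard bookkeeping that converts the hypothesis into ``$G$ admits a tangle of order at least $4g-2$''; the packing and routing estimates that make the grid side linear in the tangle order, with the correct constant, are precisely what is missing. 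The arithmetic at the end is at least consistent (tree-width at least $6g-4$ does force branch-width at least $4g-2$, exceeding the $4g-3$ bound that Robertson, Seymour and Thomas prove for planar graphs with no $g\times g$-grid minor), but the assertion that a tangle of order $\theta$ ``forces on the order of $\theta/2$ concentric cycles, hence a grid minor of side $g$'' is the claim that needs proof, not a consequence of anything you have established.
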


We can now establish the result for planar graphs.
\begin{theorem}
For every $l$, Problem~\ref{prob:orient} is solvable in time
\[O\big(nl^2(l+1)^{2(12l+14)^2}2^{[4(l+1)^{12l+14}+2(l+1)^{24l+28}]}\big)\]
when restricted to planar graphs.
\end{theorem}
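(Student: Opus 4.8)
The plan is to combine the bounded-tree-width algorithm of Theorem~\ref{th:bt} with the grid-minor/tree-width machinery, exploiting the elementary fact that orienting edges can only increase distances. The starting observation is that for any orientation $\normalvec{G}$ of $G$ we have $\diam(G)\le\diam(\normalvec{G})$, since every directed path is also a path in the underlying undirected graph; hence if $G$ admits an orientation of diameter at most $l$, then the undirected diameter of $G$ is at most $l$. Next I would convert Lemma~\ref{le:diamh} into a tree-width bound: applying that lemma with $l$ replaced by $l+1$ shows that any planar graph with a $(2l+3)\times(2l+3)$-grid-minor has diameter at least $l+1$, hence strictly greater than $l$. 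Taking the contrapositive and combining with the previous sentence yields the crucial reduction: if $G$ can be oriented to diameter at most $l$, then $G$ has no $(2l+3)\times(2l+3)$-grid-minor, so by Theorem~\ref{minor} its tree-width is at most $6(2l+3)-5=12l+13=:k$.

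The algorithm then runs in two phases with $k=12l+13$ fixed. Given a planar input $G$, first run Bodlaender's algorithm (Theorem~\ref{linalg}) to test whether the tree-width of $G$ is at most $k$, at cost $O(2^{\theta(k^3)}n)=O(2^{\theta(l^3)}n)$. If the test fails, the reduction above guarantees that $G$ has no orientation of diameter at most $l$, so we answer No. If the test succeeds, Bodlaender's algorithm also returns a tree-decomposition of width at most $k$, which we feed together with $G$ into the algorithm of Theorem~\ref{th:bt}; this decides the orientation question in time $O(c_{k,l}n)$ with the explicit constant $c_{k,l}=(l+1)^{2(k+1)^2}2^{[4(l+1)^{k+1}+2(l+1)^{2k+2}]}k^2$ recorded in Section~\ref{secalg}.

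Finally, I would substitute $k=12l+13$ into $c_{k,l}$. Then $k+1=12l+14$, $2k+2=24l+28$, and $k^2=(12l+13)^2=O(l^2)$, so $c_{k,l}$ becomes exactly the bound $l^2(l+1)^{2(12l+14)^2}2^{[4(l+1)^{12l+14}+2(l+1)^{24l+28}]}$ in the statement, and the overall running time is $O(c_{k,l}n)$. The $2^{\theta(l^3)}$ cost of the tree-width test is singly exponential in $l$ and is dominated by the doubly-exponential factor $2^{(l+1)^{\Theta(l)}}$ coming from Theorem~\ref{th:bt}, so it is absorbed into the $O(\cdot)$.

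I do not expect a genuine obstacle here, since all the heavy lifting is done by the quoted results; the essential point is the contraction-bidimensionality argument establishing that every Yes-instance has tree-width bounded in terms of $l$. The only items needing care are the arithmetic that matches the chosen grid size $2l+3$ to the exponent $k+1=12l+14$ appearing in the final expression, and the trivial edge case of a disconnected $G$, which admits no finite-diameter orientation and is correctly rejected—either by a preliminary linear-time connectivity check or automatically, because the MSOL formula of Theorem~\ref{th:bt} is unsatisfiable when no directed path of length at most $l$ exists between some pair of vertices.
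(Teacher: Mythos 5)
Your proposal is correct and follows essentially the same route as the paper: test for tree-width at most $12l+13$ with Bodlaender's algorithm, run the explicit bounded-tree-width algorithm with constant $c_{k,l}$ if the test succeeds, and otherwise invoke Theorem~\ref{minor} and Lemma~\ref{le:diamh} (with $l$ replaced by $l+1$) to conclude that the diameter of any orientation exceeds $l$. The only cosmetic difference is that the explicit constant comes from the algorithm of~\cite{eggemann:thesis,eggemann-noble:ENDM} rather than from the MSOL-based Theorem~\ref{th:bt}, whose constant is not made explicit.
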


\begin{proof}
Let $G$ be a planar graph. Using Bodlaender's algorithm~\cite{bod:linalg} we can determine in time $O\big(2^{\theta(l^3)}n\big)$ if $G$ has tree-width at most $12l+13$. If so then the algorithm will also find a corresponding tree-decomposition if one exists and then the algorithm from~\cite{eggemann:thesis,eggemann-noble:ENDM} discussed at the beginning of Section~\ref{secalg} may be used to determine whether $G$ can be oriented so that its diameter is at most $l$.

On the other hand if $G$ has tree-width at least $12l+14$, then by Theorem~\ref{minor}, it has a $(2l+3)\times (2l+3)$-grid-minor and therefore by Lemma~\ref{le:diamh} its diameter is at least $l+1$ and hence the diameter of any orientation is at least $l+1$.
\end{proof}

\section{Conclusion}
We have shown that Problem~\ref{prob:chva} which is NP-complete for arbitrary graphs becomes decidable in polynomial time for graphs belonging to any minor-closed family that does not contain all apex graphs and in particular planar graphs, even if the constant two is replaced by any larger constant. It would be interesting to try to find a more efficient algorithm for this problem, not depending on graph minor theory, and also to determine the complexity when $l$ is part of the input. Furthermore it also remains to determine the complexity of minimizing the Wiener index for planar graphs.

\section{Acknowledgement}
We would like to thank Martin Lowson for agreeing to be a host to the first author on her placement, for useful discussions and for describing to us the problem motivating our work.
\bibliographystyle{plain}

\end{document}